\definecolor{verde}{rgb}{0.5,.7,.2}
\theoremstyle{plain}
\newtheorem{theorem}{Theorem}[section]
\newtheorem{definition}[theorem]{Definition}
\newtheorem{lemma}[theorem]{Lemma}
\newtheorem{proposition}[theorem]{Proposition}
\newtheorem{corollary}[theorem]{Corollary}
\newtheorem{remark}[theorem]{Remark}
\newtheorem{example}[theorem]{Example}
\newtheorem{question}[theorem]{QUESTION}
\newtheorem{remark-question}[section]{Remark-Question}
\newtheorem{conjecture}[section]{Conjecture}
\newcommand\C{{\mathbb C}}
\newcommand\R{{\mathbb R}}
\newcommand{\ov}[1]{\overline{ #1}}
\newtheorem{example}{Example}[section]
\newtheorem{remark}[example]{Remark}
\newtheorem{theorem}[example]{Theorem}
\newtheorem{lemma}[example]{Lemma}
\newtheorem{remark-question}[example]{Remark-Question}
\font\ddpp=msbm10  scaled \magstep 1  %Caracteres "doble palo".
\newenvironment{proof}{\noindent{\bf Proof~:}}{\QED\medskip}
\def\QED{\hskip0.1em\hfill\null\ \null\nobreak\hfill
\kern3pt\lower1.8pt\vbox{\hrule\hbox {\vrule\kern1pt\vbox{\kern1.7pt
\hbox{$\scriptstyle QED$}\kern0.2pt}\kern1pt\vrule}\hrule}}
\def\R{\hbox{\ddpp R}}         %Numeros reales
\def\C{{\hbox{\ddpp C}}}       %Numeros complejos
\title[On the existence of  balanced and SKT metrics]{On the existence of  balanced and SKT metrics on nilmanifolds}
\author{Anna Fino and Luigi Vezzoni}
\address{Dipartimento di Matematica G. Peano \\ Universit\`a di Torino\\
Via Carlo Alberto 10\\
10123 Torino\\ Italy}
\email{annamaria.fino@unito.it}
 \email{luigi.vezzoni@unito.it}
\subjclass[2010]{Primary 32J27; Secondary 53C55, 53C30}
\thanks{This work was partially supported by the project PRIN {\em Variet\`a reali e complesse: geometria, topologia e analisi armonica},  the project FIRB {\em Differential Geometry and Geometric functions theory} and by GNSAGA (Indam) of Italy.\\
Keywords and phrases: {\em Special Hermitian metrics, nilmanifolds}
}
\begin{document}

\begin{abstract} On a complex manifold  an Hermitian metric which is simultaneously SKT and balanced has to be necessarily K\"ahler.  It  has been conjectured that  if a  compact complex manifold  $(M, J)$ has an   SKT metric and    a  balanced metric both compatible with $J$, then $(M, J)$   is necessarily K\"ahler.  
We show that  the conjecture is true for  nilmanifolds.
\end{abstract}

\maketitle

\section{Introduction}

A  Riemannian metric $g$  on a complex manifold $(M,J)$ is {\em compatible} with $J$  (or {\em $J$-Hermitian})  if  $g (J \cdot, J\cdot ) = g (\cdot, \cdot)$.
In the present paper we focus on the existence of {\em special} Hermitian metrics on complex manifolds. More precisely,  we study the existence of {\em strong K\"ahler with torsion} (shortly SKT) and balanced metrics compatible with the same complex structure. 
We recall that a  $J$-Hermitian metric    is called {\em SKT} (or {\em pluriclosed}) if its fundamental form $\omega$ satisfies 
$$
\partial \bar \partial \omega=0\,,
$$
while $g$ is called {\em balanced} if $\omega$ is co-closed, i.e. 
$$
d^*\omega=0,
$$
where $d^*$ denotes the formal adjoint operator of $d$ with respect to the metric $g$.

SKT metrics were introduced by Bismut in \cite{Bismut} and further studied in many papers (see e.g. \cite{FV,GGPoon,FT,LFY,ST,ST1,Verb}  and the references therein), while 
balanced metrics were introduced and firstly studied by Michelsohn in \cite{M}, where their existence is characterized in terms of currents. In a subsequent paper Alessandrini and Bassanelli 
proved that modifications of compact balanced manifolds are always balanced (see \cite{AB1,AB2}) showing a powerful tool for finding examples of balanced manifolds. 

It is well-known that if an Hermitian metric $g$ is simultaneously SKT and balanced, then it is necessarily K\"ahler (see e.g. \cite{Ivanov}).   This result has been generalized in  \cite{IP}  showing that  a compact  SKT conformally balanced  manifold has to be K\"ahler. The next conjecture was stated in \cite{FV} and it is about the existence of an SKT metric and a balanced metric both compatible with the same complex structure:

\medskip
\noindent {\bf Conjecture.} {\em Every compact complex manifold admitting both an SKT metric and a balanced metric is necessarily K\"ahler.} 

\medskip
The conjecture has been implicitly already proved in literature in some special cases. For instance,
Verbitsky has showed  in \cite{Verb} that the twistor space of a compact, anti-self-dual Riemannian manifold has
no SKT metrics unless it is K\"ahlerian and Chiose has obtained in \cite{Chiose} a similar result for non-K\"ahler
complex manifolds belonging to the Fujiki class. Furthermore, Li, Fu and Yau have proved in \cite{LFY} that some new examples of SKT manifolds do not admit any balanced metric.
A natural  source of non-K\"ahler manifolds admitting balanced metrics and SKT metrics are given by  {\em nilmanifolds}, i.e.   by compact manifolds obtained as quotients
of a  simply connected nilpotent Lie group $G$ by a co-compact lattice $\Gamma$.   It is well known that a nilmanifold  cannot admit K\"ahler  structures unless it a torus (see for instance \cite{BG,Hasegawa}).
The aim of the paper is to show that the conjecture is true when $(M,J)$ is a {\em complex nilmanifold}.  By {\em complex nilmanifold} we refer to   a nilmanifold equipped with an {\em invariant} complex structure $J$,  i.e. endowed  with a complex structure  induced  by a left-invariant complex structure on $G$. 

Our result is the following        

\begin{theorem}\label{main}
Let $M=G\backslash\Gamma$ be a nilmanifold equipped with an invariant complex structure $J$.  Assume that $(M,J)$ admits a balanced metric $g$ and an SKT metric $g'$ both compatible with $J$. Then $(M,J)$  is a complex torus. 
\end{theorem}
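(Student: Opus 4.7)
The strategy is to reduce the statement to a purely Lie-algebraic question on the nilpotent Lie algebra $\mathfrak{g}$ of $G$, and then to combine two structural results. First I would apply Belgun's symmetrization procedure: both conditions $\partial\bar\partial\omega=0$ and $d\omega^{n-1}=0$ are preserved under averaging a $J$-compatible Hermitian form against the bi-invariant Haar measure on $\Gamma\backslash G$, and the complex structure $J$ is already invariant. Hence we may assume from the start that both $g$ and $g'$ are left-invariant, i.e.\ arise from Hermitian inner products on $(\mathfrak{g},J)$.

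Next I would invoke the theorem of Enrietti--Fino--Vezzoni asserting that an invariant SKT metric on a nilmanifold equipped with an invariant complex structure forces $\mathfrak{g}$ to be $2$-step nilpotent. Applied to $g'$, this gives $[\mathfrak{g},[\mathfrak{g},\mathfrak{g}]]=0$. The main theorem therefore reduces to the following claim: \emph{if a $2$-step nilpotent Lie algebra $(\mathfrak{g},J)$ with integrable $J$ carries a balanced $J$-Hermitian inner product, then $\mathfrak{g}$ is abelian.} Once this is established, $G$ is abelian and $M$ is a complex torus.

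To prove the claim, I would decompose $\mathfrak{g}=\mathfrak{v}\oplus\mathfrak{z}$ orthogonally with respect to $g$, where $\mathfrak{z}$ is the center; the bracket is then encoded by a single tensor $\mu\colon\Lambda^{2}\mathfrak{v}\to\mathfrak{z}$, and the integrability of $J$ (vanishing of the Nijenhuis tensor) translates into algebraic constraints on $\mu$ and the way $J$ intertwines $\mathfrak{v}$ and $\mathfrak{z}$. Fixing a unitary $(1,0)$-coframe $\{\zeta^{i}\}$ adapted to this splitting and to the $J$-invariant subspace $\mathfrak{z}\cap J\mathfrak{z}$ of the center, each $d\zeta^{i}$ takes a simple ``$2$-step" form whose only nontrivial part comes from $\mu$. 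Expanding $d\omega^{n-1}$ in this coframe and imposing the balanced equation yields a system of trace identities on $\mu$; combined with the integrability constraints and a sum-of-squares positivity argument (pairing the trace identities with the Hermitian norms of the structure constants), this should force $\mu\equiv 0$.

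\emph{Main obstacle.} The first two steps are essentially invocations of known symmetrization and structural results. The delicate point is the last one, and the crux is that $J$ need \emph{not} preserve the center $\mathfrak{z}$: the naive unitary diagonalization has to be refined to accommodate the subspaces $\mathfrak{z}\cap J\mathfrak{z}$ and $\mathfrak{z}\cap J\mathfrak{v}$, and only with this refinement can one extract from $d^{\ast}\omega=0$ a genuinely positive-definite expression in the components of $\mu$ which, being zero, forces all of them to vanish.
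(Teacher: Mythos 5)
Your first two steps (symmetrization to reduce to invariant metrics, and the Enrietti--Fino--Vezzoni theorem giving $2$-step nilpotency) match the paper. But the reduction you then propose --- \emph{``if a $2$-step nilpotent $(\mathfrak{g},J)$ with integrable $J$ carries a balanced $J$-Hermitian inner product, then $\mathfrak{g}$ is abelian''} --- is false, so the rest of the plan cannot be carried out. The Iwasawa manifold is the standard counterexample: the complex Heisenberg algebra has a $(1,0)$-coframe with $d\alpha^1=d\alpha^2=0$, $d\alpha^3=\alpha^1\wedge\alpha^2$, it is $2$-step nilpotent, and the diagonal metric $\sum\alpha^i\otimes\alpha^{\bar i}$ is balanced ($d\omega\wedge\omega=0$), yet the algebra is not abelian. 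No sum-of-squares manipulation of the balanced equation alone can force $\mu\equiv 0$, because balanced non-abelian $2$-step examples exist in abundance. The error is that you discard the SKT metric after extracting $2$-step nilpotency from it; in fact the SKT metric must enter the final positivity argument in an essential way.

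Concretely, the paper uses two further inputs. First, the EFV theorem gives more than $2$-step nilpotency: it produces a $(1,0)$-coframe $\{\alpha^1,\dots,\alpha^n\}$ with $d\alpha^j=0$ for $j\le k$ and $d\alpha^j$ for $j>k$ involving only $\alpha^{rs}$ and $\alpha^{r\bar s}$ with $r,s\le k$ (this already resolves the issue you flag about $J$ not preserving the center --- it is part of the EFV structure result, not something you need to re-derive). After a Gram--Schmidt adjustment making this coframe unitary for the balanced metric $g$, the balanced condition becomes the trace identity $\sum_{r=1}^k c^{\,l}_{r\bar r}=0$ for all $l>k$. Second, writing $\omega'=\sum a_{i\bar j}\,\alpha^i\wedge\alpha^{\bar j}$ for the SKT metric and expanding $\partial\bar\partial\omega'=0$ in this coframe, one sums the coefficient equations over $r,s$; the balanced trace identity kills exactly the indefinite cross terms $c^i_{r\bar r}\bar c^j_{s\bar s}$, leaving
\[
\sum_{r,s=1}^{k}\Bigl(\omega'(X_{rs},\bar X_{rs})+\omega'(X_{r\bar s},\bar X_{r\bar s})\Bigr)=0
\quad\text{with}\quad X_{rs}=\sum_{i>k}c^i_{rs}X_i,\ \ X_{r\bar s}=\sqrt{2}\sum_{i>k}c^i_{r\bar s}X_i .
\]
Positive-definiteness of the \emph{SKT} form $\omega'$ --- not of anything attached to $g$ --- then forces all structure constants to vanish. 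So the ``sum-of-squares positivity'' you hope for exists, but it is a statement about $g'$ tested against vectors built from the structure constants, and it only becomes a sum of squares after the balanced condition of $g$ has cancelled the mixed terms. Both metrics must survive to the last line of the proof.
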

The theorem is trivial in dimension $6$ in view of the main result in \cite{FPS} and it was already proved in \cite{FV} when the nilmanifold has dimension $8$ by using a classification result proven in \cite{EFV}. 

\section{Proof of Theorem \ref{main}}

In order to prove  Theorem \ref{main}   we need the following  lemmas
\begin{lemma}\label{2.1}
Let $(M=G/\Gamma,J)$ be a complex nilmanifold. 
\begin{itemize}
\item If $(M,J)$ has a balanced metric, then it has also an {\em invariant} balanced metric \cite{FG}. 

\item If $(M,J)$ has an SKT metric, then it has also an {\em invariant} SKT metric \cite{ugarte}.
\end{itemize}
\end{lemma}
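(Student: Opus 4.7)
The approach is the standard symmetrization argument. Since $G$ is simply connected nilpotent, it is unimodular, so its left-invariant Haar measure is also right-invariant and descends to a $G$-invariant probability measure $d\nu$ on the compact quotient $M=\Gamma\backslash G$. I would define the symmetrization operator $\mu\colon\Omega^k(M)\to\Lambda^k\frg^*$ by
$$\mu(\alpha)(X_1,\ldots,X_k)=\int_M \alpha_p\bigl(X_1|_p,\ldots,X_k|_p\bigr)\,d\nu(p),$$
where each $X_i\in\frg$ is identified with the corresponding left-invariant vector field on $M$. The result $\mu(\alpha)$ is an invariant $k$-form on $M$.

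The formal properties to be established are: (i) $\mu$ commutes with the exterior derivative $d$, a consequence of the divergence-freeness of left-invariant vector fields with respect to $d\nu$ (i.e.\ of the unimodularity of $G$), so that the first-order Chevalley--Eilenberg terms integrate to zero and only the bracket terms survive; (ii) since $J$ is invariant, $\mu$ preserves the $(p,q)$-bigrading and hence commutes with $\partial$ and $\bar\partial$; (iii) if $\beta$ is a pointwise strictly positive $(k,k)$-form, then $\mu(\beta)$ is strictly positive, because strict positivity of $(k,k)$-forms is preserved under convex combinations of pointwise values and $M$ is compact.

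Granted (i)--(iii), the SKT half is immediate: if $\omega'$ is the fundamental form of an SKT metric $g'$, then $\mu(\omega')$ is a real, strictly positive, invariant $(1,1)$-form satisfying
$$\partial\bar\partial\mu(\omega')=\mu(\partial\bar\partial\omega')=0,$$
so it is the fundamental form of an invariant SKT metric. For the balanced half, since balancedness is encoded in $\omega^{n-1}$ (with $n=\dim_{\C}M$) rather than in $\omega$ itself, I would apply $\mu$ to the closed, strictly positive $(n-1,n-1)$-form $\omega^{n-1}$ associated to the given balanced metric $g$. The output $\mu(\omega^{n-1})$ is then an invariant, closed, strictly positive $(n-1,n-1)$-form. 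By a pointwise linear-algebra theorem of Michelsohn, every strictly positive $(n-1,n-1)$-form on an $n$-complex-dimensional manifold is uniquely the $(n-1)$-st exterior power of a strictly positive $(1,1)$-form $\eta$; uniqueness together with the invariance of $\mu(\omega^{n-1})$ forces $\eta$ to be invariant, and $d\eta^{n-1}=0$ then certifies $\eta$ as the fundamental form of an invariant balanced metric.

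The main technical point is the balanced case, which relies on Michelsohn's $(n-1)$-st root correspondence to transfer closedness of the symmetrized $(n-1,n-1)$-form back to a genuine Hermitian metric; in the SKT case no such root extraction is needed, and the rest of the argument is the routine averaging afforded by the unimodularity of $G$ and the invariance of $J$.
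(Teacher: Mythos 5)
Your proposal is correct, and it reproduces exactly the argument behind the paper's citations: the paper proves Lemma \ref{2.1} only by reference to \cite{FG} and \cite{ugarte}, and both of those sources use precisely this symmetrization (averaging over the compact quotient with the bi-invariant measure, which commutes with $d$ and preserves bidegree and positivity), together with Michelsohn's unique $(n-1)$-st root of a positive $(n-1,n-1)$-form in the balanced case. No gaps; the approach is the standard one.
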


\begin{lemma}[\cite{EFV}]\label{2.3}
Let $(M=G/\Gamma,J)$ be a complex nilmanifold of real  dimension $2n$. If $(M,J)$ has an SKT metric, then $G$ is (at most) $2$-step nilpotent, and there exists a complex $(1,0)$-coframe $\{\alpha^1,\dots,\alpha^n\}$ on $\mathfrak g$ satisfying the following structure equations$$
\left \{ \begin{array}{l}
d \alpha^j= 0, \quad j = 1, \ldots, k,\\
d \alpha^j  = \sum_{r, s =1}^k  \left(\frac12 c_{rs}^j  \alpha^{r}\wedge \alpha^s + c_{r \overline {s}}^j   \alpha^{r}\wedge \bar \alpha^{s}\right),\, j=k+1,\dots,n \,,
\end{array} \right.
$$
for some $k\in \{1,\dots,n-1\}$ and  with $c_{rs}^j, c_{r \overline {s}}^j \in \C$.
\end{lemma}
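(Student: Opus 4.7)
The plan is to use Lemma 2.1 to pass to the Lie algebra level and then extract the structure from the pluriclosed condition by a positivity argument. By the second item of Lemma 2.1 we may replace the SKT metric by an invariant one, so the problem becomes algebraic: $\mathfrak g$ is a nilpotent real Lie algebra of dimension $2n$, $J$ is an integrable complex structure on $\mathfrak g$, and $g$ is a $J$-Hermitian inner product whose fundamental form $\omega\in\Lambda^{1,1}\mathfrak g^*$ satisfies $\partial\bar\partial\omega=0$.

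Set $V:=\ker d\cap(\mathfrak g^*)^{1,0}$, the space of $d$-closed $(1,0)$-forms. Since $\mathfrak g$ is nilpotent and $J$ is integrable, Salamon's lemma guarantees $V\neq 0$; let $k:=\dim_{\mathbb C}V$, fix a basis $\alpha^1,\dots,\alpha^k$ of $V$, and extend to a full $(1,0)$-coframe $\alpha^1,\dots,\alpha^n$. Integrability of $J$ kills the $(0,2)$-part of $d\alpha^j$, so for $j>k$
\[
d\alpha^j=\tfrac12\sum_{r,s=1}^n A^j_{rs}\,\alpha^r\wedge\alpha^s+\sum_{r,s=1}^n B^j_{r\bar s}\,\alpha^r\wedge\bar\alpha^s,
\]
while $d\alpha^j=0$ for $j\le k$. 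What must be shown is that, after a suitable triangular modification among $\alpha^{k+1},\dots,\alpha^n$, the coefficients $A^j_{rs}$ and $B^j_{r\bar s}$ vanish whenever $r$ or $s$ exceeds $k$. Both conclusions of the lemma then follow: the structure equations reduce to the stated form with $c^j_{rs}=A^j_{rs}$ and $c^j_{r\bar s}=B^j_{r\bar s}$, and $2$-step nilpotency is immediate because the dual vectors $e_{k+1},\dots,e_n$ become central, so $[\mathfrak g,\mathfrak g]\subseteq Z(\mathfrak g)$.

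To carry this out, write $\omega=\tfrac{i}{2}\sum h_{p\bar q}\,\alpha^p\wedge\bar\alpha^q$ and expand $\partial\bar\partial\omega$ in the chosen coframe using the displayed formulas. The condition $\partial\bar\partial\omega=0$ is a $(2,2)$-form identity that is quadratic in the structure constants with coefficients involving $h_{p\bar q}$. I would then pair this identity with an auxiliary form — for instance with $\omega^{n-2}$, or with a judicious wedge product of the closed forms $\alpha^r\wedge\bar\alpha^s$ with $r,s\le k$ — to produce a scalar identity of sum-of-squares type in which each summand is a Hermitian norm square of a "forbidden" constant $A^j_{rs}$ or $B^j_{r\bar s}$ with $\max(r,s)>k$. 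Positivity of the Hermitian matrix $(h_{p\bar q})$ then forces every such constant to vanish.

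The main obstacle is the construction of this positivity identity in arbitrary dimension. The dimension $6$ argument in \cite{FPS} and the dimension $8$ analysis in \cite{FV} use an explicit classification of invariant $(1,0)$-coframes, a crutch unavailable here. The delicate combinatorial point is to choose the scalar contraction of $\partial\bar\partial\omega=0$ carefully enough that the cross terms between different forbidden constants cancel, leaving only the nonnegative diagonal contributions controlled by $g$; without the right choice the residual indefinite cross terms obscure the positivity, and it is precisely at this step that the specific interplay between nilpotency, integrability of $J$, and the SKT condition must be exploited.
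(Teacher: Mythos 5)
The paper does not prove this lemma: it is imported verbatim from \cite{EFV}, so the comparison is with the argument given there. Your set-up is the right one --- pass to an invariant metric via Lemma \ref{2.1}, take a basis $\alpha^1,\dots,\alpha^k$ of the space $V$ of closed $(1,0)$-forms (nonzero by Salamon's theorem), extend to a $(1,0)$-coframe, and aim to kill every structure constant carrying an index larger than $k$. But the entire mathematical content of the lemma is the step you defer: producing the identity that forces those ``forbidden'' constants to vanish. You describe it only as a hoped-for contraction of $\partial\bar\partial\omega=0$ against $\omega^{n-2}$ or against products of the closed forms, yielding a sum of Hermitian norm squares, and you yourself flag this construction as ``the main obstacle.'' A plan whose decisive step is acknowledged to be missing is not a proof, and here there is a concrete reason to doubt that the step can be carried out as described: the SKT identity is not of one sign. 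Indeed, the computation in the proof of Theorem \ref{main} in this very paper shows what the natural contraction gives --- equation \eqref{coefficient}, summed over $r,s$, is a \emph{difference} of two nonnegative quantities, namely $2\|\sum_r (c^i_{r\bar r})_i\|^2_{A}$ minus the sum of squares of the remaining constants; the authors must invoke the balanced condition \eqref{balanced} to annihilate the positive term before positivity does any work. Without an extra hypothesis of that kind, no single scalar contraction of $\partial\bar\partial\omega=0$ is known to be definite, which is precisely why the lemma is a theorem of \cite{EFV} rather than a one-line observation.

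The actual argument of \cite{EFV} is structurally different from what you propose. Rather than one global pairing, it studies the center $\mathfrak z$ of $\mathfrak g$: by evaluating $\partial\bar\partial\omega$ on carefully chosen quadruples of vectors (typically involving elements of $\mathfrak z$ and of $J\mathfrak z$) one extracts \emph{localized} positivity statements, proves that $\mathfrak z\cap J\mathfrak z$ is nontrivial, and then argues inductively (passing to the quotient by a $J$-invariant central subspace) to conclude that $[\mathfrak g,\mathfrak g]$ is contained in a $J$-invariant central subspace $\mathfrak c$ with $\mathfrak g/\mathfrak c$ abelian. Note also that your closing remark understates the conclusion: the structure equations assert more than $2$-step nilpotency --- they assert that the commutator sits inside a $J$-invariant part of the center, which is exactly the nontrivial structural output of the inductive argument and does not follow from $2$-step nilpotency alone.
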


Now we can prove Theorem \ref{main}

\begin{proof}[Proof of Theorem $\ref{main}$]
Suppose that $(M=G/\Gamma, J)$ is not a complex torus, i.e. that $G$ is not abelian
and denote by $\mathfrak g$ the Lie algebra of $G$.  Assume that $(M,J)$ admits a balanced metric $g$ and also an SKT metric $g'$ both compatible with $J$. 
Then in view of Lemma \ref{2.1}, we may assume both $g$ and $g'$ invariant and regarding them as scalar products on $\mathfrak g$. This allows us to work at the level of the Lie algebra $\frak g$. 
As a consequence of Lemma \ref{2.3}, the existence of the {\rm SKT} metric implies that Lie algebra  $\frak g$ is $2$-step nilpotent and  that $(\frak g,J)$ has a $(1,0)$-coframe 
$\{\alpha^1, \ldots ,\alpha^n\}$ satisfying  the following structure equations 
\begin{equation}\label{streq}
\left \{ \begin{array}{l}
d \alpha^j= 0, \quad j = 1, \ldots, k,\\
d \alpha^j  = \sum_{r, s =1}^k  \left(\frac12 c_{rs}^j  \alpha^{r s} + c_{r \overline {s}}^j   \alpha^{r{\overline{s}}}\right).
\end{array} \right.
\end{equation}
for some $k\in \{1,\dots,n-1\}$ and with $c_{rs}^j, c_{r \overline s}^j \in \C$. Here we use the notation $\bar \alpha^i=\alpha^{\bar i}$ and $\alpha^{r_1\cdots r_p\bar s_1\cdots \bar s_q}=
\alpha^{s_1}\wedge \dots \alpha^{s_p}\wedge \alpha^{\bar s_1}\wedge \dots \alpha^{\bar s_q}$.
We may assume without restrictions that   the coframe $\{\alpha^i\}$ is unitary with respect to the balanced metric $g$. Indeed,  
since in the Gram-Schmidt process  the spaces spanned by the first $r$ elements of the original basis are preserved, we can modify the coframe $\{\alpha^r\}$ making it unitary with respect to $g$ and satisfying the same structure equations as in \eqref{streq} with different structure constants. 
In this way $g$ writes as 
$$
g=\sum_{r=1}^n\alpha^r\otimes \alpha^{\bar r}
$$
and the balanced condition can be written in terms of $c_{r\bar s}^l$'s as 
\begin{equation}\label{balanced}
\sum_{r=1}^k c_{r \ov r}^ l =0,
\end{equation}
for every $l > k$ (see also \cite[Lemma 2.1]{AV}). 

Next we focus on the SKT metric $g'$. Since  $\mathfrak g$ is $2$-step nilpotent, we have 
 $\partial\bar \partial \alpha^r=0$, $r=1,\dots,n$.
If $$
\omega'=\sum_{i,j=1}^n a_{i\bar j} \alpha^i\wedge \alpha^{\bar j}
$$
is the fundamental form of $g'$, then the SKT condition $\partial \bar \partial \omega'=0$ writes as 
\begin{equation}\label{SKT}
\sum_{i,j=k+1}^n    a_{i \overline j}  ( \overline \partial \alpha^i \wedge   \partial  \alpha^{\ov j}  - \partial \alpha^i \wedge \overline  \partial \alpha^{\ov j}) =0\,. 
\end{equation}
Equation \eqref{SKT} can be written in terms of the  structure constants as 
$$
\sum_{i,j=k+1}^n \sum_{r, s,u,v =1}^k   a_{i \overline j} \left(c_{r\bar v}^i \bar c_{u\bar s}^{ j}+\frac14c_{rs}^i\bar c_{uv}^{j}\right)   \alpha^{rs \bar u\bar v}  =0.
$$
By  considering the component along $ \alpha^{rs \bar s\bar r}$ in the above expression we get 
$$
\sum_{i,j=k+1}^n  a_{i \overline j}\left( c_{r\bar r}^i \bar c_{s\bar s}^{ j}+\frac14c_{rs}^i\bar c_{sr}^{j}
-c_{s\bar r}^i \bar c_{s\bar r}^{ j}-\frac14c_{sr}^i\bar c_{sr}^{j}
+c_{s\bar s}^i \bar c_{r\bar r}^{ j}+\frac14c_{sr}^i\bar c_{rs}^{j}
-c_{r\bar s}^i \bar c_{r\bar s}^{ j}-\frac14c_{rs}^i\bar c_{rs}^{j}\right)=0\,,
$$
i.e. the condition
\begin{equation} \label{coefficient}
\sum_{i,j=k+1}^n  a_{i \overline j}\left(c_{r\bar r}^i \bar c_{s\bar s}^{ j}
-c_{s\bar r}^i \bar c_{s\bar r}^{ j}+c_{s\bar s}^i \bar c_{r\bar r}^{ j}
-c_{r\bar s}^i \bar c_{r\bar s}^{ j}+c_{rs}^i\bar c_{sr}^{j}\right)=0\,.
\end{equation}
Now by taking the sum of  \eqref{coefficient} for $r,s=1,\dots k$ and keeping in mind the balanced assumption \eqref{balanced} we get 
\begin{equation}\label{SKTnew}
\sum_{i,j=k+1}^n \sum_{r,s=1}^k  a_{i \overline j}\left(
2c_{s\bar r}^i \bar c_{s\bar r}^{ j}
+c_{rs}^i\bar c_{rs}^{j}\right)=0\,.
\end{equation}
Let us consider now the $(1,0)$-vectors $X_{sr}$ and $X_{r\bar s}$ on $\mathfrak g$ defined by 
$$
X_{rs}=\sum_{i=k+1}^nc_{rs}^iX_i\,,\quad X_{r\bar s}=\sum_{i=k+1}^n  \sqrt{2}\,  c_{r\bar s}^iX_i
$$
where $\{X_1,\dots,X_n\}$ is the dual frame to  $\{\alpha^1,\dots,\alpha^n\}$. 
We have 
$$
\sum_{r,s=1}^k\left(\omega'(X_{rs},\bar X_{rs})+ \omega'(X_{r\bar s},\bar X_{r\bar s})\right)=\sum_{i,j=k+1}^n \sum_{r,s=1}^k  a_{i \overline j}\left(
2c_{s\bar r}^i \bar c_{s\bar r}^{ j}
+c_{rs}^i\bar c_{rs}^{j}\right)
$$
and so equation \eqref{SKTnew} implies $X_{rs}=X_{r\bar s}=0$ for every $r,s=1,\dots, k$. So every $\alpha^r$ is closed and $\mathfrak g $ is abelian, contradicting the first assumption in the proof. 
\end{proof}

\begin{remark}
{\em Theorem \ref{main} is trivial in dimension $6$. Indeed, in view of \cite{FPS} and Lemma \ref{2.1}, if a $6$-dimensional complex nilmanifold  $(M, J)$ has an SKT metric, then every  invariant   $J$-Hermitian  metric on $(M,J)$  is SKT and so every  invariant  balanced metric compatible with $J$ is automatically K\"ahler.  More generally, the same argument works when $M$ has arbitrary  real dimension $2n$, but $k=n-1$. }
\end{remark}

\begin{remark}{\em 
Another case where Theorem \ref{main} is trivial is when $k=1$. In this case  the Lie algebra  $\frak g$ is necessary isomorphic to $\frak h_3^{\R} \oplus \R^{2n - 3}$, where  $\frak h_3^{\R}$ is the real $3$-dimensional Heisenberg algebra,  and the result follows.
}

\end{remark}

\noindent {\it Acknowledgements}. We would like to thank Luis Ugarte and Raquel Villacampa for useful comments on the paper.

\end{document}